\documentclass[oneside, reqno]{amsart}
\usepackage{graphicx} 
\usepackage{float}
\usepackage{amsmath}
\usepackage{amsthm}
\usepackage[a4paper]{geometry}

\usepackage[hidelinks]{hyperref}
\usepackage{amssymb}
\usepackage{tikz-cd} 
\usepackage{quiver}
\usepackage{mathrsfs} 
\usepackage{enumitem}
\setlist{nosep}

\newtheorem{theorem}{Theorem}
\newtheorem*{thm*}{Theorem}

\newtheorem{lemma}[theorem]{Lemma}

\theoremstyle{definition}

\newtheorem{conjecture}[theorem]{Conjecture}

\numberwithin{equation}{section}

\newcommand{\N}{\mathbb{N}}

\title{A Hajnal-Szemerédi theorem for skewed colorings}

\author[M. Birken]{Mathis Birken}
\address[M. Birken]{Institut f\"ur Informatik, Universit\"at Heidelberg, Heidelberg,
Germany
}
\email{mathis.birken@uni-heidelberg.de}

\date{\today}
\begin{document}

\begin{abstract}
    The Hajnal–Szemerédi theorem guarantees an equitable \((r+1)\)-coloring of every graph of maximum degree at most \(r\). We prove a version of this result for more general prescribed color class sizes, subject only to a natural upper bound.
\end{abstract}

\maketitle

\section{Introduction}

In this note, we prove a conjecture of Kuchukova, Perkins and Povill, namely that a variant of the Hajnal-Szemerédi theorem holds for \textit{skewed} or \textit{prescribed} colorings. More precisely, given $n \in \N_0$ and a vector $\vec n= (n_1, \dots, n_d) \in \N^d$ of positive integers summing to $n$, an \textit{$\vec n$-coloring}, or a \textit{prescribed coloring}, of a graph $G$ on $n$ vertices is a proper coloring of the vertices of $G$ such that for each $i \in [d]$, there are exactly $n_i$ vertices in color class $i$. We prove the following theorem.

\begin{theorem} \label{main_theorem}
    Let $G$ be a graph on $n$ vertices with $\Delta(G) \le r \in \N_0$. If $\vec n$ is a color vector whose coordinates satisfy $n_i \le \lfloor {n / (r +1)} \rfloor$ for all $i \in [d]$, then there exists an $\vec n$-coloring of $G$.
\end{theorem}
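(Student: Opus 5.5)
The approach is to adapt the Kierstead–Kostochka short proof of the Hajnal–Szemerédi theorem directly to prescribed class sizes, rather than deducing the result from an equitable coloring. Deducing it fails because an equitable $(r+1)$-coloring has classes of sizes $m$ or $m+1$, where $m=\lfloor n/(r+1)\rfloor$. Such a class can be split freely, but the multiset $\{n_i\}$ generally cannot be grouped into $r+1$ bins with exactly these sums.

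Two preliminary observations will be used throughout. First, $\sum_i n_i = n$ and $n_i\le m$ together force $d \ge n/m \ge r+1$. Second, the number of colors that are ``tight'' ($n_i$ large) is controlled, since each $n_i\le n/(r+1)$.

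The proof is by induction on $|E(G)|$; the edgeless case is trivial. Remove an edge $xy$ and take an $\vec n$-coloring of $G-xy$. If $x$ and $y$ lie in different classes we are done. Otherwise, since $d\ge r+1>\deg(x)$ in $G$, some class $W$ contains no neighbour of $x$. Moving $x$ into $W$ gives a \emph{nearly $\vec n$-coloring}: a proper coloring in which one class $V^-$ has size $n_{i}-1$, one class $V^+$ has size $n_j+1$, and every other class has its prescribed size.

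The core of the argument is repairing nearly $\vec n$-colorings. Following Kierstead–Kostochka, we define the digraph on the classes with an arc $X\to Y$ if some $v\in X$ has no neighbour in $Y$. Let $\mathcal A$ be the set of classes from which $V^-$ is reachable. If $V^+\in\mathcal A$, shifting one vertex along a directed path from $V^+$ to $V^-$ yields an $\vec n$-coloring, since each intermediate class keeps its size. Otherwise we count edges between $A=\bigcup\mathcal A$ and $B=V(G)\setminus A$. Every $y\in B$ has a neighbour in each class of $\mathcal A$, so $e(A,B)\ge |\mathcal A|\,|B|$. The degree bound gives $e(A,B)\le r|A|-(\text{edges inside }A)$.

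Here the skew enters. We have $|A| = \sum_{i\in\mathcal A} n_i - 1$ and $|B| = n-|A|$, and we need
$$\sum_{i\in\mathcal A} n_i \le |\mathcal A|\, m \le |\mathcal A|\frac{n}{r+1}.$$
This is exactly where the hypothesis $n_i\le m$ replaces equality of class sizes. It shows that $A$ is not too large relative to the number of its classes, so $|\mathcal A||B|$ is large. We then aim to derive the analogue of the Kierstead–Kostochka ``solo vertex'' structure, that is, vertices $z\in A$ with exactly one neighbour in some class, and perform their double-swap recoloring. The overall argument is an induction on $|\mathcal A|$ or on the number of classes, via a secondary nearly-coloring problem on $G[B]$ or on a contracted instance.

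I expect the main obstacle to be this final exchange step. In the equitable setting, the counting uses that every class has size exactly $m$ (or $m\pm1$), so that the ``terminal'' classes of $\mathcal A$ contain many solo vertices. With unequal $n_i$, a terminal class may be small and the averaging can fail. My first attempt would be to order the colors so that $n_1\ge n_2\ge\cdots\ge n_d$. I would then choose $V^-$ and the initial move of $x$ so that the deficient class is a largest available one, and redo the Kierstead–Kostochka weighting with weights proportional to $m-n_i$ (the slack of each class). The slack classes can absorb vertices and restart the argument. If that fails, a fallback is to reduce to the case where all but a bounded number of the $n_i$ equal $m$: merge small colors pairwise whenever $n_i+n_j\le m$, solve the merged instance, then split the merged independent sets arbitrarily. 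After this merging, any two merged sizes sum to more than $m$, so at most one class has size $\le m/2$, which should make the counting go through.
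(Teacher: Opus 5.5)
\textbf{There is a genuine gap: the proposal stops where the proof begins.} Your setup matches the paper's: induction on $|E(G)|$, the nearly $\vec n$-coloring obtained by moving $x$, the accessibility digraph, the path shift when $V^+$ is accessible, and the bound $\sum_{i\in\mathcal A}n_i\le|\mathcal A|m$. But you never carry out the solo-vertex analysis, the exchange step, or the secondary induction. In their place you offer conditional repairs (slack-proportional weights, making $V^-$ a largest class, merging small colors), none of which is worked out. The second of these is not available as stated: $V^-$ is forced to be the class containing both ends of the deleted edge.

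More importantly, the obstacle you anticipate does not exist. The Kierstead--Kostochka counting uses class sizes only through \emph{upper} bounds. With $a=|\mathcal A|$ and $q=r+1-a$:
\begin{itemize}
\item one needs $|A|\le am-1$, hence $|B|\ge qm+1$;
\item for a class $W$ in the minimal cut-off family $\mathcal A'$ of $t$ initial classes, with solo vertices $S$ and $D=W\setminus S$, one compares $|E(W,B)|\ge 2|B|-q|S|\ge qm+q|D|+2$ with $|E(W,B)|\le q|S|+(t+q)|D|\le qm+t|D|$, which uses only $|W|\le m$;
\item the final weighting compares $\mu(A',B)\le q|A'|\le qtm$ with $\mu(A',B)\ge t|B|$, which uses only $|A'|\le tm$.
\end{itemize}
A small terminal class makes every one of these inequalities easier. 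So nothing in the averaging breaks, and no reweighting or merging is needed. Your merging reduction is valid but irrelevant.

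The second missing ingredient is the recursion, which is where the hypothesis $n_i\le\lfloor n/(r+1)\rfloor$ actually has to be checked. In both exchange cases you move a solo vertex $z\in W$ and a special neighbour $y_1\in B$ of $z$. You must then color $G[B\setminus\{y_1\}]$ with the sub-vector of $\vec n$ indexed by the inaccessible classes. For this, the primary induction must be stated over all degree bounds and all admissible color vectors, and you must verify that the hypothesis is inherited:
\begin{itemize}
\item $G[B\setminus\{y_1\}]$ has fewer edges, since it misses $zy_1$;
\item its maximum degree is at most $q-1$, since each $y\in B$ has a neighbour in each of the $a$ accessible classes;
\item $\lfloor(|B|-1)/q\rfloor\ge m\ge n_i$, by $|B|\ge qm+1$.
\end{itemize}
Your phrase ``an induction on $|\mathcal A|$ or on the number of classes, via a secondary nearly-coloring problem on $G[B]$ or on a contracted instance'' does not pin this down.

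Once these two points are in place, the Kierstead--Kostochka argument goes through with $s=m$ and $\le$ in place of $=$, which is exactly what the paper does. In Case 1, $z$ is movable to another accessible class; the accessible part is then repaired by a path avoiding $W$, and $G[B\setminus\{y_1\}]$ is colored by induction. In Case 2, $z$ has two nonadjacent special neighbours $y_1,y_2$; this produces a new nearly $\vec n$-coloring in which $y_2$'s class becomes accessible, so $q$ drops and the secondary induction on $q$ applies.
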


Theorem~\ref{main_theorem} implies the equitable case as follows. Let $G$ be a graph on $n$ vertices with maximum degree at most $r$. Then $G$ admits an equitable $(r+1)$-coloring: if $n$ is a multiple of $r+1$, this follows directly from Theorem~\ref{main_theorem}. Otherwise, apply the theorem to $G \sqcup K_p$, where $p \in [r]$ is chosen so that $n + p$ is a multiple of $r+1$. The vertices of $K_p$ receive distinct colors, so restricting the coloring to $G$ yields color classes whose sizes differ by at most one.

The Hajnal-Szemerédi theorem on equitable colorings was originally proved in \cite{HajnalSzemeredi1970}. The prescribed version stated in Theorem~\ref{main_theorem} appears as Conjecture 1.8 in \cite{kuchukova2026}. Our proof is obtained by adapting the later proof of the Hajnal-Szemerédi theorem due to Kierstead and Kostochka \cite{kierstead_kostochka}. The following is a self-contained account of their proof with the necessary modifications. 

\section{Main proof}

Let $G$ be a graph as in Theorem~\ref{main_theorem}. Fix some color vector $\vec n$. We say that a proper coloring $f$ of $G$ is an \textit{almost-$\vec n$-coloring}, or an \textit{almost-prescribed coloring} if there is a \textit{large} class $V^+ = V^+(f)$ and a \textit{small} class $V^-= V^-(f)$ which differ by 1 compared to the prescribed sizes.
More precisely, if $V_1, \dots, V_d$ are the color classes of $f$, with $V^+ = V_\alpha$, $V^- = V_\beta$ and $\alpha \neq \beta$, then we demand that $|V^+| = n_\alpha + 1$ and $|V^-| = n_\beta - 1$; and $|V_i| = n_i$ for all other $i \in [d]$. We will investigate how one can transform an almost prescribed coloring into a prescribed coloring. Again, we will follow \cite{kierstead_kostochka} very closely and any reader familiar with the original paper is invited to focus only on the changes compared with the equitable case.

Given an almost-prescribed coloring $f$, we consider the following auxiliary digraph $H = H(G, f)$: the vertices of $H$ are the color classes of $f$ and for distinct color classes $V, W$, there is an arc $V\to W \in E(H)$ if and only if some vertex $y \in V$ is \textit{movable} to $W$, i.e. $y$ has no neighbors in $W$.
We call $W \in V(H)$ \textit{accessible} if there exists a directed path from $W$ to $V^-$ in $H$. We write $\mathscr A = \mathscr A(f)$ for the set of accessible classes. Trivially, $V^- \in \mathscr A$. Moreover, we set $A := \bigcup \mathscr A \subseteq V(G)$. Similarly, we denote by $\mathscr B := V(H) \setminus \mathscr A$ the set of inaccessible color classes and write $B := V(G) \setminus A$. The easiest case is the following.

\begin{lemma} \label{first_lemma}
    If $f$ is an almost prescribed coloring of $G$ whose large class $V^+$ is accessible, then $G$ admits a prescribed coloring.
\end{lemma}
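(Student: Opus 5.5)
Since $V^+$ is accessible, the auxiliary digraph $H$ contains a directed path from $V^+$ to $V^-$. We pick a \emph{shortest} such path $V^+ = W_0 \to W_1 \to \dots \to W_k = V^-$; here $k \ge 1$ because $V^+ \neq V^-$. For each $j \in \{0,\dots,k-1\}$, the arc $W_j \to W_{j+1}$ provides a vertex $y_j \in W_j$ with no neighbours in $W_{j+1}$. We then move every $y_j$ simultaneously from $W_j$ to $W_{j+1}$. The new color classes are $W_j' = (W_j \setminus \{y_j\}) \cup \{y_{j-1}\}$, where the first part is absent for $j=k$ and the second for $j=0$. Classes not on the path are left unchanged.

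The class sizes are easy to check. $W_0 = V^+$ loses one vertex and gains none, so it reaches its prescribed size $n_\alpha$. $W_k = V^-$ gains one vertex and loses none, so it reaches $n_\beta$. Every intermediate class $W_j$ with $0<j<k$ loses one vertex and gains one, so it keeps its prescribed size. All other classes are untouched. Since a shortest path has no repeated vertices, the $W_j$ are pairwise distinct, so none of these changes overlap and the size vector becomes exactly $\vec n$.

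The only real point is properness. Consider a new class $W_{j+1}' = (W_{j+1}\setminus\{y_{j+1}\}) \cup \{y_j\}$. The set $W_{j+1}\setminus\{y_{j+1}\}$ is independent because $W_{j+1}$ was, and $y_j$ has no neighbours in $W_{j+1}$ by the choice of $y_j$. Hence $W_{j+1}'$ is independent, and the case $j+1=k$ is the same with nothing removed. $W_0'$ is a subset of an independent set. The thing we must not overlook is that each class receives only \emph{one} incoming vertex. This holds because the path visits each class at most once, so no class has to absorb two moved vertices that might be adjacent to each other. The resulting coloring is therefore a proper $\vec n$-coloring, that is, a prescribed coloring of $G$.
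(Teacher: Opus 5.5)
Your proof is correct and follows the paper's argument: take a directed path from $V^+$ to $V^-$ in $H$ and shift one witness vertex along each arc simultaneously. Choosing a shortest path, and checking the sizes and the independence of each new class, just makes explicit details the paper's two-line proof leaves implicit.
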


\begin{proof}
    If $V_1\to \cdots \to V_k$ is a path from $V_1=V^+$ to $V_k=V^-$, then for each $j \in [k-1]$ there exists a vertex $y_j \in V_j$ which no neighbor in $V_{j+1}$. Moving all vertices $y_j$ simultaneously produces a prescribed coloring.
\end{proof}

Assume from now on that $V^+$ is inaccessible, so $V^+ \subseteq B$. We set $s = \lfloor {n / (r +1)} \rfloor$. Moreover, let $a := |\mathscr A|$ and $q := r + 1 - a$. Since $r + 1 \le d$ (as $s(r + 1) \le n = \sum_{i= 1}^d n_i\le sd$), we have $|\mathscr B| = d - a \geq q$. Since no $y \in B$ is movable to any of the classes in $\mathscr A$, we have 
\begin{equation} \tag{1} \label{eq_d_B_y}
    d_A(y) \ge a \; \text{ and hence } \; d_B(y) \le q - 1.
\end{equation}
In particular, $q \ge 1$. Note that $|V^-| \le s - 1$, so we have $|A| \le as - 1$. Consequently,
\begin{equation} \tag{$\star$} \label{eq_size_B}
    |B| = n - |A| \ge (r+1)s - as + 1 = qs + 1.
\end{equation}

Moreover, if $A = V^-$, then by (\ref{eq_d_B_y}) we have 
\[ |E(A, B)| \le r (s- 1) < rs + 1 \le |B|,\]
which is a contradiction, since each vertex $y \in B$ must have at least one neighbor in $A$. It follows that $a \ge 2$.

We now choose a subset $\mathscr A' \subseteq \mathscr A$ (and set $A' := \bigcup \mathscr A'$) as follows. For $W \in \mathscr A$ consider the set $\mathscr T_W \subseteq \mathscr A \setminus \{W\}$ consisting of the classes $V$ for which every directed path from $V$ to $V^-$ in $H$ contains $W$. In other words, removing $W$ cuts off $\mathscr T_W$ from $V^-$. A color class $W$ is called \textit{initial} if $\mathscr T_W = \emptyset$. We now choose a non-initial $U$ such that $\mathscr A' := \mathscr T_U \neq \emptyset$ is inclusion-minimal. This is possible as $V^-$ is non-initial. By minimality, all color classes in $\mathscr A'$ must be initial. 
Let $t := |\mathscr A'|$. Note that no class in $\mathscr A'$ has a vertex movable to any class in $(\mathscr A \setminus \mathscr A')\setminus \{U\}$. Thus every $x \in A'$ satisfies 
\begin{equation} \tag{2} \label{ineq_d_A_x}
    d_A(x) \ge a - t - 1.
\end{equation}

We introduce the following terminology: an edge $zy$ with $z \in W \in \mathscr A'$ and $y \in B$ is called a \textit{solo edge} if $N_W(y) = \{z\}$, that is, $z$ is the only neighbor of $y$ belonging to $W$. The endpoints of a solo edge are called \textit{solo vertices}, and each endpoint is called a \textit{special neighbor} of the other. We let $S_z \subseteq B$ and $S^y \subseteq A'$ denote the sets of special neighbors of $z \in A'$ and $y \in B$, respectively. For each $y \in B$, at most $r - (a + d_B(y))$ color classes in $\mathscr A$ can contain more than one neighbor of $y$. This implies
\begin{equation} \tag{3} \label{ineq_size_Sy}
    |S^y| \ge t - q + 1 + d_B(y).
\end{equation}

\begin{lemma} \label{second_lemma}
    If there exists a color class $W \in \mathscr A'$ such that no solo vertex in $W$ is movable to a class in $\mathscr A \setminus \{W\}$, then $q + 1 \le t$. Moreover, in this case every vertex $y \in B$ is solo.
\end{lemma}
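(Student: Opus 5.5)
The idea is to fix the class $W\in\mathscr A'$ from the hypothesis and double count the edges between $W$ and $B$, splitting both sides according to solo-ness relative to $W$. Write $S\subseteq W$ for the solo vertices of $W$ and $N:=W\setminus S$. On the $B$ side, let $B_1$ be the set of $y\in B$ with $|N_W(y)|=1$ and $B_2$ the set with $|N_W(y)|\ge 2$. Every $y\in B$ has at least one neighbor in $W$, because $W$ is accessible and $y$ is inaccessible, so $y$ is not movable to $W$. Hence $B=B_1\sqcup B_2$.

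First I record the degree bounds.
\begin{itemize}
\item For $z\in S$: by hypothesis $z$ is not movable to any class in $\mathscr A\setminus\{W\}$. So $d_A(z)\ge a-1$, and therefore $d_B(z)\le r-a+1=q$.
\item For $x\in N$: inequality (\ref{ineq_d_A_x}) gives $d_B(x)\le q+t$.
\item Every $B$-neighbor of $x\in N$ lies in $B_2$. Otherwise that neighbor $y$ would satisfy $N_W(y)=\{x\}$, making $xy$ a solo edge and $x$ solo.
\item Every edge from $B_1$ to $W$ is a solo edge, so it ends in $S$.
\end{itemize}

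Next I count $|B|=|B_1|+|B_2|$. Each $y\in B_1$ is charged its unique edge to $S$. Each $y\in B_2$ is charged half of each of its at least two edges to $W$. This gives
\[
|B|\le e(S,B_1)+\tfrac12 e(S,B_2)+\tfrac12 e(N,B_2)\le q|S|+\tfrac{q+t}{2}|N|.
\]
Suppose $t\le q$. Then $\frac{q+t}{2}\le q$, so $|B|\le q|W|$.

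To finish this step I need $|W|\le s$. Here $|W|=n_i\le s$ with the $n_i$ of $W$'s color, except when $W$ is $V^+$ or $V^-$. The class $V^+$ is in $\mathscr B$. The class $V^-$ is not in $\mathscr A'=\mathscr T_U$, since the trivial path from $V^-$ to itself avoids $U\neq V^-$. So $|B|\le qs$, contradicting (\ref{eq_size_B}). Hence $q+1\le t$.

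For the second claim, take any $y\in B$. Using $d_B(y)\ge 0$ and $t\ge q+1$, inequality (\ref{ineq_size_Sy}) gives $|S^y|\ge t-q+1\ge 2>0$. So $y$ has a special neighbor, i.e.\ $y$ is solo.

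The main obstacle is getting the edge count sharp enough to beat $|B|\ge qs+1$. A crude bound $(q+t)|N|$ on the non-solo side is too weak. The key observation is that edges from non-solo vertices of $W$ land only in $B_2$, so they can be counted with weight $\frac12$.
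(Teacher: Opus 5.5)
Your proof is correct and is essentially the paper's argument. Both double count $E(W,B)$ using $d_B(z)\le q$ for solo $z\in W$ and $d_B(x)\le q+t$ for non-solo $x$, together with the fact that vertices of $B$ not adjacent to a solo vertex have at least two neighbours in $W$. This yields exactly your inequality $|B|\le q|S|+\frac{q+t}{2}|W\setminus S|$; the paper rearranges it to $(t-q)|D|\ge 2$ rather than arguing by contradiction. One minor aside: the step excluding $W=V^-$ is unnecessary, since $|V^-|\le s-1$ anyway, and its justification skips the case $U=V^-$ (there $V^-\notin\mathscr T_U$ by definition).
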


\begin{proof}
    Let $S \subseteq W$ denote the set of solo vertices and let $D := W \setminus S$. If a vertex in $B$ is adjacent to exactly one vertex in $W$, then by definition it is adjacent to a vertex in $S$. Hence, any vertex in $B \setminus N_B(S)$ must have at least two neighbors in $W$. We therefore obtain the lower bound
    \[2|B| - |N_B(S)| = |N_B(S)| + 2|B \setminus N_B(S)| \le  |E(W, B)|.\]
    Since no solo vertex in $W$ is movable to a class in $\mathscr A \setminus \{W\}$, we have $d_B(z) \le q$ for all $z \in S$, so $|N_B(S)| \le q |S|$. Combining this observation with $|S| + |D| \le s$ and (\ref{eq_size_B}), we obtain
    \[qs + q|D| + 2  \le 2(qs + 1) - q|S| \le  |E(W, B)|.\]
    On the other hand, by (\ref{ineq_d_A_x}) we have $d_B(x) \le q + t$ for all $x \in A'$. This yields the upper bound
    \[|E(W, B) | \le q|S| + (t + q)|D| \le qs + t|D|.\]
    Combining the two bounds, obtain $q|D| + 2 \le t|D|$, which implies $q + 1 \le t$. Moreover, by (\ref{ineq_size_Sy}), we obtain $|S^y| \ge 2$ for every $y \in B$, implying that all vertices of $B$ are solo.
\end{proof}

\begin{lemma} \label{third_lemma}
    There exists a color class $W \in \mathscr A'$ and a solo vertex $z \in W$ such that either $z$ is movable to a class in $\mathscr A \setminus \{W\}$ or $z$ has two nonadjacent special neighbors in $B$. 
\end{lemma}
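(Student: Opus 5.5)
The plan is to argue by contradiction. Suppose that for every $W \in \mathscr A'$, no solo vertex of $W$ is movable to a class in $\mathscr A \setminus \{W\}$, and that no solo vertex has two nonadjacent special neighbors. I will show that the solo vertices of $A'$ would then have to outnumber $|A'|$.

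First, I apply Lemma~\ref{second_lemma} to any $W \in \mathscr A'$; its hypothesis holds by assumption. This gives $t \ge q+1$, and it shows that every $y \in B$ is solo, so $S^y \neq \emptyset$ for all $y \in B$.

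Second, I use the assumption that special neighbors are pairwise adjacent. For each solo vertex $z \in A'$, the set $S_z$ is then a clique in $G[B]$. Hence every $y \in S_z$ satisfies $d_B(y) \ge |S_z| - 1$, that is,
\[ |S_z| \le d_B(y) + 1 \quad\text{for every solo edge } zy. \]

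Third, I double count the solo edges with weight $1/|S_z|$ on the edge $zy$. Let $Z \subseteq A'$ be the set of solo vertices of $A'$. Each $z \in Z$ contributes total weight exactly $1$, and $|S^y| \ge t-q+1+d_B(y)$ by (\ref{ineq_size_Sy}). Therefore
\[ |Z| = \sum_{z \in Z} \sum_{y \in S_z} \frac{1}{|S_z|} = \sum_{y\in B} \sum_{z \in S^y} \frac{1}{|S_z|} \ge \sum_{y \in B} \frac{|S^y|}{d_B(y)+1} \ge \sum_{y\in B} \frac{t-q+1+d_B(y)}{1+d_B(y)}. \]
Write $c := t-q+1$, so $c \ge 2 > 1$. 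The map $x \mapsto (c+x)/(1+x)$ is decreasing in $x$, and $d_B(y) \le q-1$ by (\ref{eq_d_B_y}). So each summand is at least $(t-q+1+q-1)/q = t/q$.

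Fourth, I combine this with (\ref{eq_size_B}):
\[ |Z| \ge \frac{t}{q}|B| \ge \frac{t}{q}(qs+1) > ts. \]
On the other hand, every color class has at most $s$ vertices, because $n_i \le s$ and the large class $V^+$ lies in $B$. Hence $|Z| \le |A'| \le ts$, which is a contradiction.

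I expect the main obstacle to be the counting step. A naive bound $|S_z| \le q$ summed over $z$ is too weak when $q \ge 2$. The key is to use the clique bound $|S_z| \le d_B(y)+1$ locally at each $y$, and to pair it with the $+d_B(y)$ term in (\ref{ineq_size_Sy}), which is what the weighted count achieves.
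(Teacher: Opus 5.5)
Your proof is correct and follows essentially the same route as the paper. The paper puts weight $q/|S_x|$ on each solo edge, which is yours scaled by $q$. It double counts that weight against $|A'|\le ts$ and $|B|\ge qs+1$, bounding each $y$'s contribution with the clique property and \eqref{ineq_size_Sy}. The only cosmetic difference is that the paper bounds every $|S_z|$ with $z\in S^y$ by $c_y=\max_{z\in S^y}|S_z|$, using $c_y-1\le d_B(y)\le q-1$, whereas you use $|S_z|\le d_B(y)+1$ directly and then monotonicity in $d_B(y)$.
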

\begin{proof}
    Suppose not. Then for every solo vertex $z \in A'$, $S_z$ is a clique and no solo vertex in $A'$ can be moved to a different class in $\mathscr A$. In particular, every class $ W \in \mathscr A'$ satisfies the hypothesis of Lemma~\ref{second_lemma}. It follows that every vertex $y \in B$ is solo. We define a weight function $\mu$ on $E(A', B)$ given by 
    \[\mu(xy) = \begin{cases}
        \frac{q}{|S_x|} & \text{if } xy \text{ is solo,}\\
        0 & \text{if } xy \text{ is not solo.}\\
    \end{cases}\]
    For $C,D\subseteq V(G)$, write
    $\mu(C,D):=\sum_{xy\in E(C,D)}\mu(xy),$
    and abbreviate $\mu(\{x\},D)=\mu(x,D)$.
    Then if $z \in A'$ is solo, we have $\mu(z,B) = q$, otherwise $\mu(z,B) = 0$. We obtain $\mu(A', B) \le |A'|q \le qst$. 
    
    Now consider $y \in B$. Choose $z \in S^y$ such that $|S_z|$ is maximal and set $c_y := |S_z|$. Since $S_z \subseteq B$ is a clique containing $y$, \eqref{eq_d_B_y} gives $c_y - 1\le d_B(y) \le q - 1$, and hence $c_y \le q$. Combining this with (\ref{ineq_size_Sy}), we obtain
    \[\mu(A', y) = \sum_{x \in S^y} \frac{q}{|S_x|} \ge |S^y| \frac{q}{c_y} \ge (t - q + c_y) \frac{q}{c_y} \ge (t-q)\frac{q}{c_y} + q \ge t,\]
    where in the last step we used that $t - q \ge 1$ by Lemma~\ref{second_lemma}. Together with (\ref{eq_size_B}), this yields
    \[\mu(A', B) \ge t|B| \ge t(qs + 1) > qst \ge \mu(A', B),\]
    which is a contradiction.
\end{proof}

\begin{proof}[Proof of Theorem~\ref{main_theorem}]
    We proceed by induction on the number of edges of $G$. If $G$ has no edges, the statement clearly holds. We now assume that the theorem holds for all graphs with fewer edges, for every degree bound $r$ and every color vector.
    Given an edge $e= xy \in E(G)$, the induction hypothesis gives a prescribed coloring of $G-e$. Now, if $x$ and $y$ received different colors, then the same coloring is a prescribed coloring of $G$. Otherwise, if $x$ and $y$ lie in the same color class $V$, is possible to move $x$ to a different color class $W$ since $d(x) \le r$ and $r+1 \le d$. This yields an almost prescribed coloring $f$ of $G$ with $V^+ = W \cup \{x\}$ and $V^- = V\setminus \{x\}$. We will now show that $G$ has a prescribed coloring by a secondary induction on $q = q(f)$.

    If $V^+ \in \mathscr A$, we are done by Lemma~\ref{first_lemma}. In particular, this handles the base case $q \le 0$, since if $B \neq \emptyset$, then (\ref{eq_d_B_y}) implies that $a \le r$, so $q \ge 1$. Otherwise, Lemma~\ref{third_lemma} guarantees the existence of a solo vertex $z \in W \in \mathscr A'$ such that one of the following cases holds.

    \vspace{6pt}
    \textit{Case 1:} $z$ is movable to some class $X \in \mathscr A \setminus \{W\}$. Choose $y_1 \in S_z$. Write $B^- := B \setminus \{y_1\}$ and $A^+ = A \cup \{y_1\}$. Move $z$ to $X$ and $y_1$ to $W \setminus \{z\}$; denote by $\varphi$ the coloring of $G[A^+]$ thus obtained.  Clearly, $W \neq V^-$. Since $X$ is accessible with respect to $f$ and $W \in \mathscr A'$ is initial, there exists a (possibly trivial) directed path from $X$ to $V^-$ in $H(G, f)$ which avoids $W$. Such a path can now be used as in Lemma~\ref{first_lemma} to produce a coloring $\varphi'$ of $G[A^+]$ for which all the classes in $\mathscr A$ have the prescribed sizes.
        
    We now apply the primary induction hypothesis. By (\ref{eq_d_B_y}), the induced graph $G' := G[B^-]$ has maximum degree at most $q-1$. Moreover, $zy_1 \in E(G)$ but $zy_1 \notin E(G')$, and hence $G'$ has fewer edges than $G$. Let $\vec n_{\mathscr B}$ denote the restriction of $\vec n$ to the coordinates indexed by the classes in $\mathscr B$. Since 
    \[
    \left \lfloor \frac{|B^-|}{(q-1)+1} \right\rfloor \ge \left \lfloor \frac{(qs + 1) - 1}{q} \right \rfloor  = s,
    \]
    we may indeed apply the primary induction hypothesis to $G'$, producing an  $\vec n_{\mathscr B}$-coloring $g$ of $G'$. Combining $\varphi'$ and $g$ produces the desired prescribed coloring of $G$. 
        
    \vspace{6pt}
    \textit{Case 2:} $z$ is not movable to any class in $\mathscr A \setminus \{W\}$ and $z$ has two nonadjacent special neighbors $y_1, y_2 \in S_z$. Define $A^+$ and $B^-$ as above. As in Case 1, the primary induction hypothesis gives a prescribed coloring $g$ of $G[B^-]$.
    Note that $d_{A^+}(z) = d_A(z) +1\ge a$, so $d_{B^-}(z) \le r - a = q-1$. Since $|\mathscr B| \ge q$, we can therefore move $z$ to a $g$-color class $Y \subseteq B^-$, producing a coloring $g'$ of $B^* := B^- \cup \{z\}$. Moreover, move $y_1$ to $W$ to obtain a coloring $\psi$ of $A^* := V(G) \setminus B^*$. Combine the colorings $\psi$ and $g'$ into an almost-prescribed coloring $h$ of $G$.
    Observe that we have $A^* \subseteq A(h)$: every class $V \in \mathscr A \setminus \{W\}$ remains accessible since $W \in \mathscr A'$ is initial, so $H(G, f)$ contains a path from $V$ to $V^-$ which avoids $W$.  Finally, the modified class $W^* := (W \cup \{y_1\}) \setminus \{z\}$ is accessible with respect to $h$ because any path from $W$ to $V^-$ in $H(G, f)$ survives: the first arc has a witness different from $z$, as we assumed that $z$ is not movable in $\mathscr A$.
    
    Finally, in this case the vertex $y_2$ is movable to the class $W^*$, so the $h$-color class of $y_2$ is accessible. Thus $q(h) < q(f)$, and so the secondary induction hypothesis gives a prescribed coloring of $G$.
\end{proof}

\section{Future work}

We conjecture that if $n$ is not a multiple of $r+1$, then the color vector may contain a certain number of larger classes. This would generalize the equitable case mentioned above in a natural way.

\begin{conjecture}
    Let $n = s(r+1) + m$, where $m \in [r]$. Let $\vec n = (n_1, \dots, n_d)$ be a color vector with $n_i = s+1$ for at most $m$ coordinates and $n_i \leq s$ for all other $i$.
    Then any $n$-vertex graph~$G$ with $\Delta(G) \leq r$ has an $\vec n$-coloring.
\end{conjecture}

For instance, the graph $G = sK_{r+1} \sqcup K_m$ admits exactly the colorings described by the conjecture.

\vspace{6pt}

\textbf{AI statement:} The author used AI tools, specifically ChatGPT, for proofreading and to assist in developing the conjecture.

\bibliographystyle{alpha}
\bibliography{literature}

@article{kierstead_kostochka, title={{A Short Proof of the Hajnal–Szemerédi Theorem on Equitable Colouring}}, volume={17}, DOI={10.1017/S0963548307008619}, number={2}, journal={Combinatorics, Probability and Computing}, author={Kierstead, H. A. and Kostochka, A. V.}, year={2008}, pages={265–270}}

@misc{kuchukova2026,
      title={Sampling Colorings with Fixed Color Class Sizes}, 
      author={Kuchukova, A. and Perkins, W. and Povill, X.},
      year={2026},
      eprint={2603.08259},
      archivePrefix={arXiv},
      primaryClass={math.CO},
      url={https://arxiv.org/abs/2603.08259}, 
      note={Available at \url{https://arxiv.org/abs/2603.08259}}
}

@incollection{HajnalSzemeredi1970,
  author    = {Hajnal, A. and Szemer{\'e}di, E.},
  title     = {{Proof of a conjecture of P. Erd{\H{o}}s}},
  booktitle = {Combinatorial Theory and its Application},
  pages     = {601--623},
  publisher = {North-Holland},
  address = {London},
  year      = {1970}
}

\end{document}